\Crefname{figure}{Figure}{Figures}
\renewcommand{\geq}{\geqslant}
\renewcommand{\leq}{\leqslant}
\renewcommand{\le}{\leq}
\renewcommand{\ge}{\geq}
\newcommand{\N}{\mathbb{N}} 
\newcommand{\R}{\mathbb{R}}
\newcommand{\card}[1]{|{#1}|}
\newtheorem{theorem}{Theorem}[section]
\newtheorem{lemma}[theorem]{Lemma}
\theoremstyle{definition}
\newtheorem{definition}[theorem]{Definition}
\theoremstyle{remark}
\tikzset{grid/.style={gray!30,very thin}}
\tikzset{axis/.style={gray!50,->,>=stealth'}}
\tikzset{convex/.style={draw=none,fill=lightgray,fill opacity=0.7}}
\tikzset{convexborder/.style={very thick}}
\tikzset{point/.style={blue!50}}
\tikzset{hs/.style={fill opacity=0.3,fill=orange,draw=none}}
\tikzset{hsborder/.style={orange,ultra thick,dashdotted}}
\newcommand{\overbar}[1]{\mkern 1.5mu\overline{\mkern-1.5mu#1\mkern-1.5mu}\mkern 1.5mu}
\newcommand{\conv}{\mathrm{conv}}
\newcommand{\dunion}{\uplus}
\newcommand{\ci}{\mathrm{ci}}
\newcommand{\slope}{\mathrm{sl}}
\title[Convexly independent subsets of Minkowski sums]{Convexly independent subsets of Minkowski sums of convex polygons}
\author[Mateusz Skomra]{Mateusz Skomra$^1$}
\address{$^1$LAAS-CNRS, Universit{\'e} de Toulouse, CNRS, Toulouse, France}
\email{mateusz.skomra@laas.fr}
\author[St\'ephan Thomass\'e]{St\'ephan Thomass\'e$^{2,3}$}
\address{$^2$Univ Lyon, EnsL, UCBL, CNRS,  LIP, F-69342, LYON Cedex 07, France}
\address{$^3$Institut Universitaire de France}
\email{stephan.thomasse@ens-lyon.fr}
\begin{document}

\begin{abstract}We show that there exist convex $n$-gons $P$ and $Q$ such that the largest convex polygon in the Minkowski sum $P+Q$ has size $\Theta(n\log n)$. This matches an upper bound of Tiwary.

\end{abstract}

\maketitle

\section{Introduction}

Let $X$ be a finite set of points in the plane. A subset $C$ of $X$ is \emph{convexly independent} if $C$ forms a convex polygon. We denote by $\ci(X)$ the largest size of a convexly independent subset of $X$. The celebrated Happy Ending Theorem, from Erd\H{o}s and Szekeres~\cite{erdos_szekeres_happy_ending}, asserts that $\ci(X)$ goes to infinity when $|X|$ goes to infinity and $X$ does not have three points on the same line.
More precisely, the minimum value one can achieve for $\ci(X)$ is logarithmic in $|X|$. To the opposite, one can try to maximize $\ci(X)$ when the set $X$ satisfies some geometrical constraint. A well-studied case is when $X$ is the \emph{Minkowski sum} $P+Q\coloneqq\{p+q \colon p\in P,q\in Q\}$ of two sets of points $P$ and $Q$. 

Eisenbrand, Pach, Rothvo{\ss}, and Sopher~\cite{eisenbrand_convex_indep} proved that $\ci(P+Q)=O(n^{4/3})$ when $|P|=|Q|=n$. This result was complemented by a construction of B{\'i}lka, Buchin, Fulek, Kiyomi, Okamoto, Tanigawa, and T{\'o}th~\cite{bilka_convex_indep} showing the existence of such $P$ and $Q$ satisfying $\ci(P+Q)=\Theta(n^{4/3})$. Surprisingly, the set $Q$ they use in the extremal constructions can be chosen convex. A natural question is then to ask for the maximum possible value of $\ci(P+Q)$ when both $P$ and $Q$ are convex polygons. In 2014, Tiwary~\cite{tiwary_convex} proposed an upper bound by showing that $\ci(P+Q)=O\bigl((n+m)\log (n+m)\bigr)$ when $P$ and $Q$ are respectively a convex $n$-gon and a convex $m$-gon. He concluded his paper by mentioning that his upper bound seemed very generous, and left as an open problem the existence of a matching lower bound. Our main result in this paper is that Tiwary's proof indeed provides a sharp bound by exhibiting a matching construction.

\begin{theorem}\label{th:main}
There exist two families $(P_{k})_{k \ge 1}, (Q_{k})_{k \ge 1} \subset \R^{2}$ of convexly independent sets 
such that $\card{P_{k}} = \card{Q_{k}} = 2^{k}$ and $\ci(P_{k} + Q_{k}) \ge (k+2)2^{k-1}$ for all $k \ge 1$.
\end{theorem}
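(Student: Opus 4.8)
The plan is to exhibit inside $P_k+Q_k$ an explicit set $C_k$ of the required cardinality, and to prove by induction on $k$ that the vertices of $P_k$ and $Q_k$ can be placed so that $P_k$, $Q_k$ and $C_k$ are all convexly independent. Index the vertices as $p_s,q_s$ with $s\in\{0,1\}^k$, let $e_i$ denote the flip of the $i$-th coordinate, and put
\[
N(s)=\{s\}\cup\{\,s\oplus e_i : 1\le i\le k,\ s_i=0\,\},\qquad
C_k=\{\,p_s+q_t : s\in\{0,1\}^k,\ t\in N(s)\,\}.
\]
Then $\card{N(s)}=1+\#\{i:s_i=0\}$, so $\card{C_k}=2^k+k\,2^{k-1}=(k+2)2^{k-1}$, which is exactly the target value. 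Since $p_s+q_t\in P_k+Q_k$ automatically, everything reduces to the placement problem. (For $k\le 2$ one can already take $C_k$ on the boundary of the Minkowski sum, so the genuine content is in $k\ge 3$.)

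I would construct $P_k,Q_k$ recursively by \emph{vertex doubling}: fix a linear isomorphism $A$ and split each level-$(k-1)$ vertex into two, $p_{(s',0)}=Ap'_{s'}$ and $p_{(s',1)}=Ap'_{s'}+\delta_{s'}$, and likewise $q_{(s',b)}=Aq'_{s'}+b\,\eta_{s'}$, where the $\delta_{s'},\eta_{s'}$ are short perturbation vectors still to be chosen. A direct computation then shows that $C_{k-1}$ lifts, via $x\mapsto Ax+\text{const}$, to a copy of itself in which every \emph{diagonal} point $x=p'_{s'}+q'_{s'}$ becomes the triple $\{\widetilde x,\ \widetilde x+\eta_{s'},\ \widetilde x+\delta_{s'}+\eta_{s'}\}$ and every \emph{crossing} point $x=p'_{s'}+q'_{s'\oplus e_i}$ becomes the pair $\{\widetilde x,\ \widetilde x+\delta_{s'}+\eta_{s'\oplus e_i}\}$; as $C_{k-1}$ has $2^{k-1}$ diagonal and $(k-1)2^{k-2}$ crossing points, this recovers $\card{C_k}=3\cdot2^{k-1}+2(k-1)2^{k-2}=(k+2)2^{k-1}$ and reproduces $N(\cdot)$ correctly. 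So the construction is determined by $A$ and by the vectors $\delta_{s'},\eta_{s'}$.

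Since the perturbations are tiny, convex independence of $P_k$, $Q_k$ and $C_k$ translates into a family of \emph{slope constraints}: the direction of $\delta_{s'}$ must lie in the angular gap of $P_{k-1}$ at $p'_{s'}$ and in the gap of $C_{k-1}$ at the diagonal point of index $s'$ (with the extra requirement $\slope(\eta_{s'})<\slope(\delta_{s'})$ inside each diagonal triple), and for each crossing point the direction of the sum $\delta_{s'}+\eta_{s'\oplus e_i}$ must lie in the gap of $C_{k-1}$ at that point; and symmetrically for the $\eta$'s against $Q_{k-1}$. The useful observation is that an angular gap is an open arc of length less than $\pi$, hence a convex cone of directions: a sum of two vectors whose directions lie in it again has its direction in it. Consequently it is enough to force, for each $s'$, the direction of $\delta_{s'}$ to lie in the \emph{intersection} of all gaps in which $\delta_{s'}$ is mentioned, and similarly for $\eta_{s'}$; then all the ``sum'' constraints hold automatically.

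The main obstacle I anticipate is precisely to keep these intersections non-empty at every level. This is what forces a strengthening of the induction hypothesis: one must carry along a quantitative claim that $P_k$, $Q_k$ and $C_k$ are \emph{robustly} convex --- every vertex has an angular gap bounded below, and the gaps that are forced to overlap (those attached to a common $p'_{s'}$ or $q'_{s'}$) are arranged so that they actually do --- and then check that vertex doubling can always be performed while degrading this robustness by at most a controlled amount per level. With $A$ and the magnitudes of the $\delta_{s'},\eta_{s'}$ tuned so that this degradation is summable over the $k$ levels, the induction goes through, $C_k$ is convexly independent, and Theorem~\ref{th:main} follows; the fact that one loses only a bounded amount of ``slope room'' per doubling is the combinatorial reason the bound is $\Theta(k\,2^{k})$ rather than larger.
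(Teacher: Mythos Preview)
Your combinatorics are correct and the recursion $|C_k| = 3\cdot 2^{k-1} + 2(k-1)2^{k-2} = (k+2)2^{k-1}$ is exactly the right one. The geometric argument, however, contains a genuine obstruction, not merely an unfinished step.

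The sufficient condition you extract---that the direction of $\delta_{s'}$ lie in the \emph{intersection} of all the angular gaps in which $\delta_{s'}$ is mentioned---is in general unsatisfiable. The ``angular gap'' at a vertex of a convex polygon is its exterior-angle cone, and for distinct vertices these cones have pairwise disjoint interiors (together they tile an angle of $2\pi$). Concretely, after your convex-cone reduction, $\delta_{s'}$ is required to lie simultaneously in the gap of $C_{k-1}$ at the diagonal point indexed by $s'$ \emph{and} in the gap of $C_{k-1}$ at every crossing point $(s',\,s'\oplus e_i)$ with $s'_i=0$. These are distinct vertices of the same convex polygon $C_{k-1}$, so for $k\ge 3$ the intersection of their gaps is empty. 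No ``robustness'' strengthening of the induction hypothesis can repair this: making each gap wider does not make disjoint arcs overlap. If you abandon the sufficient condition and try to satisfy the sum constraints $\delta_{s'}+\eta_{s'\oplus e_i}\in(\text{gap at crossing point})$ directly, you face a coupled system in which, for each fixed $s'$, the sums $\delta_{s'}+\eta_{s'\oplus e_i}$ (same $\delta_{s'}$, varying $i$) must land in $\Theta(k)$ pairwise disjoint cones; you give no argument that this system is consistent, and it is the entire content of the theorem.

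The paper sidesteps this obstruction by a different mechanism. Rather than splitting each vertex \emph{in place}---which is what forces the incompatible local slope windows---it shrinks the whole triple $(A,B,C)$ by $L_\varepsilon(x,y)=(\varepsilon x,\varepsilon^2 y)$, makes a second copy rotated by~$60^\circ$, and \emph{concatenates} entire chains after translating them to six fixed anchor points. The rotation produces three well-separated slope regimes (near $0$, near $1/\sqrt{3}$, near $\sqrt{3}$), and the anchor points are chosen so that the connecting slopes fall strictly between these regimes; thus the concatenated sequences are south-east chains for small~$\varepsilon$. This global shrink-rotate-concatenate step replaces your $\Theta(k)$-fold local intersection condition by a single check of five fixed inequalities among limiting slopes, which is why the induction closes.
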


An equivalent point of view of Minkowski sums is to consider $(P+Q)/2$ instead of $P+Q$, and therefore $\ci(P+Q)$ represents the maximum size of a set $M$ of midpoints of $P,Q$-segments in convex position. If we actually draw all segments corresponding to these midpoints, we get a bipartite graph with vertex set $P \dunion Q$, whose edges are all pairs $(p,q) \in P \times Q$ such that $(p+q)/2 \in M$. In our construction, the number of edges of this graph is $cn\log n$, all vertices of $P$ and $Q$ are in convex position, and all midpoints of edges are also in convex position.
This kind of drawing was introduced by Halman, Onn, and Rothblum~\cite{halman_onn_convex_dim} where they define a \emph{strong convex embedding} of a graph $G=(V,E)$ as a function $f \colon V \to \R^{2}$ such that $f(V)$ is convex and $\{(f(x)+f(y))/2 \colon (x,y) \in E \}$ is also convex. They showed that if $G$ admits a strong convex embedding, then $|E|\leq 5n-8$ when $n\geq 3$ is the number of vertices. Recently, Garc\'{\i}a-Marco and Knauer~\cite{garcia-marco_knauer} reduced this bound to $2n-3$. Equivalently their result shows that $\ci(P+P)\leq 2n-3$ when $P$ is a convex $n$-gon. Perhaps surprisingly, our construction shows that when slightly relaxing strong convex embedding to only ask convex positions for the two partite sets of a bipartite graph, the bound goes from linear to $n\log n$.

Another very attractive reason motivating the study of $\ci(P+Q)$ for convex $n$-gons is the famous unit distance problem of Erd\H{o}s and Moser~\cite{erdos_moser_unit} asking for the maximum number of pairs of points in a convex $n$-gon $P$ with distance exactly one. The trick is to observe that if two points $p,p'$ of $P$ have distance $1$, then $p-p'$ lies on the unit circle. A careful counting shows, in particular, that $\ci\bigl(P + (-P) \bigr)$ is an upper bound on the number of unit distance pairs. Unfortunately, our construction shows that $\ci(P+Q)$ cannot be directly used to improve the already known $O(n\log n)$ upper bound on the unit distance problem (see F\"uredi~\cite{furedi_unit_distance} and Aggarwal~\cite{aggarwal_unit} for a sharper estimate). We do not know however if the graphs $G_k$ arising from our construction can be realized as convex unit distance graphs. If not, it would be interesting to find a forbidden pattern in $G_k$. This would extend the list of forbidden matrices provided in~\cite{aggarwal_unit}, as none of them appears in $G_k$.

\section{Proof of the main theorem}

The proof of Tiwary~\cite{tiwary_convex} is based on the fact that any collection of convexly independent points in $\R^2$ can be split into at most four monotone chains as in \cref{fig:polygon_split}. We use the name ``south-east chain'' for the chain that is situated in the bottom-right part of this figure. Our construction is based on this type of chains. More formally, we make the following definition and point out the subsequent lemma.
 
\begin{definition}
A \emph{south-east chain} is a sequence $(a^{(1)}, \dots, a^{(n)}) \subset \R^{2}$ of $n \ge 2$ points in the plane that satisfies the following two conditions. First, the sequence is strictly increasing on both coordinates, i.e., we have $a^{(1)}_{1} < a^{(2)}_{1} < \dots < a^{(n)}_{1}$ and $a^{(1)}_{2} < a^{(2)}_{2} < \dots < a^{(n)}_{2}$ (where $a^{(k)}_{i}$ denotes the $i$th coordinate of the point $a^{(k)}$). Second, the corresponding sequence of consecutive slopes is strictly increasing, i.e., we have
\[
\frac{a^{(2)}_{2} - a^{(1)}_{2}}{a^{(2)}_{1} - a^{(1)}_{1}} < \dots < \frac{a^{(k+1)}_{2} - a^{(k)}_{2}}{a^{(k+1)}_{1} - a^{(k)}_{1}} < \dots < \frac{a^{(n)}_{2} - a^{(n-1)}_{2}}{a^{(n)}_{1} - a^{(n-1)}_{1}} \, .
\]
We say that $n$ is the \emph{length} of a south-east chain $(a^{(1)}, \dots, a^{(n)})$.
\end{definition}

\begin{lemma}\label{le:hull_of_chain}
If the points $(a^{(1)}, \dots, a^{(n)})$ form a south-east chain, then they are convexly independent, i.e., the convex hull of $\{a^{(1)}, \dots, a^{(n)}\}$ has $n$ vertices.
\end{lemma}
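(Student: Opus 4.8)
The plan is to prove that every point of the chain is a vertex of $\conv(a^{(1)}, \dots, a^{(n)})$, which is exactly the assertion that this polygon has $n$ vertices. I would do this by exhibiting, for each index $k$, a linear functional on $\R^{2}$ whose restriction to $\{a^{(1)}, \dots, a^{(n)}\}$ attains its minimum uniquely at $a^{(k)}$; a point of a finite set admits such a strictly separating functional if and only if it is a vertex of the convex hull, so producing one functional per point suffices.

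First I would record the consecutive slopes $s_{j} \coloneqq \frac{a^{(j+1)}_{2} - a^{(j)}_{2}}{a^{(j+1)}_{1} - a^{(j)}_{1}}$ for $1 \le j \le n-1$: the first defining condition of a south-east chain makes each denominator positive, so the $s_{j}$ are well defined, and the second condition says precisely that $s_{1} < s_{2} < \dots < s_{n-1}$. Then, for a fixed $k$ with $2 \le k \le n-1$ (the case $n = 2$ being trivial, since two distinct points always form a segment), I would pick any real $\sigma_{k}$ with $s_{k-1} < \sigma_{k} < s_{k}$ and consider $\phi_{k}(x,y) \coloneqq y - \sigma_{k} x$. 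The key computation is the telescoping identity
\[
\phi_{k}(a^{(j+1)}) - \phi_{k}(a^{(j)}) = \bigl(a^{(j+1)}_{1} - a^{(j)}_{1}\bigr)\bigl(s_{j} - \sigma_{k}\bigr) \qquad (1 \le j \le n-1),
\]
whose right-hand side is negative for $j \le k-1$ (since $s_{j} \le s_{k-1} < \sigma_{k}$ and $a^{(j+1)}_{1} - a^{(j)}_{1} > 0$) and positive for $j \ge k$ (since $s_{j} \ge s_{k} > \sigma_{k}$). Hence the sequence $\phi_{k}(a^{(1)}), \dots, \phi_{k}(a^{(n)})$ is strictly decreasing up to index $k$ and strictly increasing afterwards, so $a^{(k)}$ is its unique minimizer and therefore a vertex of the hull.

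For the two endpoints I would instead use that the first coordinate is strictly increasing along the chain: the functionals $(x,y) \mapsto x$ and $(x,y) \mapsto -x$ are minimized uniquely at $a^{(1)}$ and at $a^{(n)}$ respectively (equivalently, one may take $\sigma$ smaller than $s_{1}$, resp.\ larger than $s_{n-1}$, in the same formula, and the monotonicity argument above still applies). Combining the two cases shows that all $n$ points are vertices, which proves the lemma. I do not expect a genuine obstacle here; the only points needing a little care are that the $s_{j}$ are well defined, the separate (easy) treatment of the two endpoints, and the degenerate length-$2$ case. As an alternative one could observe that the chain lies on the graph of a piecewise-linear function that is strictly convex at each of its breakpoints and then invoke standard facts about lower convex hulls, but the functional-by-functional verification above is entirely self-contained.
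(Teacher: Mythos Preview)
Your argument is correct. The paper actually states this lemma as an elementary observation and gives no proof at all, so there is nothing to compare against; your supporting-functional argument (choosing $\sigma_{k}$ strictly between the consecutive slopes and showing $y-\sigma_{k}x$ is uniquely minimized at $a^{(k)}$, with the first coordinate handling the endpoints) is a perfectly clean way to fill in the omitted details.
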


\begin{figure}[t]
\centering
\begin{tikzpicture}
\begin{scope}[scale = 0.6]
      \draw[gray!60, ultra thin] (-1.5,-0.5) grid (7.5,7.5);
        \fill[black] (3,0) circle (0.15);
        \fill[black] (4.5,0.5) circle (0.15);
        \fill[black] (6,2) circle (0.15);
       \fill[black] (6.5,4) circle (0.15);
        \fill[black] (5.5,6.2) circle (0.15);
        \fill[black] (3.7,6.6) circle (0.15);
        \fill[black] (1.5,6) circle (0.15);
         \fill[black] (0,4.5) circle (0.15);
          \fill[black] (-0.5,2.5) circle (0.15);
          \fill[black] (0.5,0.6) circle (0.15);
          \draw[thick,dashed] (-1.5,4) -- (7.5,2.7);
          \draw[thick,dashed] (1.5,-0.5) -- (2.7,7.5);

\end{scope}
\end{tikzpicture}
\caption{Splitting convexly independent points into monotone chains.}\label{fig:polygon_split}
\end{figure}

\begin{figure}[t]
\centering
\begin{tikzpicture}
\begin{scope}[scale = 0.6]
      \draw[gray!60, ultra thin] (-2,-1) grid (8,8);
        \draw[very thick] (1,1) -- (5,1.5) -- (7,2);
        \draw[very thick] (-0.37,1.37) -- (1.2,5.08) -- (1.77,7.06);
        \draw[very thick] (0.32,1.18) -- (3.1,3.29) -- (4.38,4.53);
        \fill[black] (1,1) circle (0.15) node[below right]{$a$};
        \fill[black] (5,1.5) circle (0.15) node[below right]{$b$};
        \fill[black] (7,2) circle (0.15) node[below right]{$c$};
        \fill[black] (-0.37,1.37) circle (0.15) node[above left]{$R(a)$};
        \fill[black] (1.2,5.08) circle (0.15) node[above left]{$R(b)$};
        \fill[black] (1.77,7.06) circle (0.15) node[above left]{$R(c)$};
        \fill[black] (4.38,4.53) circle (0.15) node[below right]{$\frac{c+R(c)}{2}$};
        \fill[black] (3.1,3.29) circle (0.15) node[below right]{$\frac{b+R(b)}{2}$};
        \fill[black] (0.32,1.18) circle (0.15) node[below left]{$\frac{a+R(a)}{2}$};
        \fill[black] (0,0) circle (0.15) node[below right]{$(0,0)$};
\end{scope}
\end{tikzpicture}
\caption{Rotation of a flat south-east chain.}\label{fig:rotation}
\end{figure}

Before giving a formal proof of \cref{th:main}, let us explain it intuitively. Our proof is based on some elementary properties of rotations. Let $R \colon \R^{2} \to \R^{2}$ denote the counterclockwise rotation by $60$ degrees centered at zero. Let $a, b, c \in \R^{2}$ be three points on a plane such that $(a,b,c)$ is a south-east chain. Moreover, suppose that this chain is sufficiently flat (more precisely, that the line defined by extending the segment $\interval{b}{c}$ forms an angle smaller than $30$ degrees with the horizontal axis). Then, the images $(R(a), R(b), R(c))$ also form a south-east chain. Moreover, if all the slopes between consecutive points of $(a,b,c)$ are close to $0$, then the corresponding slopes of $(R(a), R(b), R(c))$ are close to $\sqrt{3}$ (in other words, the corresponding segments form angles close to $60$ degrees with the horizontal axis). Even more, under these conditions, the triple 
$\frac{1}{2}(a + R(a), b + R(b), c + R(c))$ also forms a south-east chain, and its slopes are close to $\nicefrac{1}{\sqrt{3}}$ (which corresponds to the angle of $30$ degrees). We refer to \cref{fig:rotation} for an illustration. The same applies to south-east chains formed by more than three points. In this way, given a sufficiently flat south-east chain, we can construct two new chains, one with slopes close to $\sqrt{3}$ and one with slopes close to $\nicefrac{1}{\sqrt{3}}$.

Consider now the linear map $L_{\varepsilon} \colon \R^{2} \to \R^{2}$, $L_{\varepsilon}(x,y) \coloneqq (\varepsilon x, \varepsilon^{2} y)$, where $\varepsilon > 0$. It is immediate to see that this map preserves south-east chains (i.e., an image of a south-east chain under $L_{\varepsilon}$ is again a south-east chain). Moreover, this map flattens the chains. In other words, if $A$ is a south-east chain and $\varepsilon > 0$ is small enough, then the image of $A$ under $L_{\varepsilon}$ is a south-east chain that is sufficiently flat to apply the previous observations. Moreover, for small $\varepsilon$, the image of $A$ under $L_{\varepsilon}$ is contained in a small neighborhood of $0$.

\begin{figure}[t]
\centering
\begin{tikzpicture}[xscale=3.5,yscale=3.5, min/.style={draw,fill=black, scale=0.5, shape=circle}]
      \draw[gray!60, ultra thin, xstep =0.25, ystep=0.25] (-0.1,-0.1) grid (1.1,2.6);
      \tikzmath{\x1 = 0.1; \x2 = 0.07; \x3 = 0.3; \y1 = 0; \y2 = \x2*sqrt(3); \y3 = \x3/sqrt(3);}
      \node[min] (A) at (0,0){};
      \draw (0,0) node[below right]{$A$};
      \draw[thick] (0 - \x1, 0 - \y1) -- (0 + \x1, 0 + \y1);
      \node[min] (B) at (0,2){};
      \draw (0,2) node[below right]{$B$};
      \draw[thick] (0 - \x1, 2 - \y1) -- (0 + \x1, 2 + \y1);
      \node[min] (C) at (0,1){};
      \draw (0,1) node[below right]{$C$};
      \draw[thick] (0 - \x1, 1 - \y1) -- (0 + \x1, 1 + \y1);
      \node[min] (A') at (1,5/2){};
      \draw (1,5/2) node[below right]{$A'$};
      \draw[thick] (1 - \x2, 5/2 - \y2) -- (1 + \x2, 5/2 + \y2);
      \node[min] (B') at (1,1){};
      \draw (1,1) node[below right]{$B'$};
      \draw[thick] (1 - \x2, 1 - \y2) -- (1 + \x2, 1 + \y2);
      \node[min] (C') at (1,7/4){};
      \draw (1,7/4) node[below right]{$C'$};
      \draw[thick] (1 - \x2, 7/4 - \y2) -- (1 + \x2, 7/4 + \y2);
      
      \node[min] (D) at (1/2,5/4){};
      \draw (1/2,5/4) node[below right]{$D$};
      \draw[thick] (1/2 - \x3, 5/4 - \y3) -- (1/2 + \x3, 5/4 + \y3);
      
      \draw[thick,dashed] (C) -- (D);
      \draw[thick,dashed] (D) -- (C');
      \draw[thick,dashed] (A) -- (B');
      \draw[thick,dashed] (B) -- (A');
                        
      \end{tikzpicture}
\caption{The construction of south-east chains.}\label{fig:main}
\end{figure}

Suppose now that we are given three south-east chains $A, B, C$ such that $C$ is included in the set of points $(A  + B)/2$. By applying $L_{\varepsilon}$ to all three chains, we can suppose that they are arbitrarily flat, and contained in a small neighborhood of $0$. Then, we can apply the rotation $R$ to $A, B, C$. In this way, we obtain three chains $A', B', C'$ that are again contained in a neighborhood of $0$, but whose slopes are close to $\sqrt{3}$. We now translate the chains as follows. We do not apply any translation to $A$, we translate $B$ by the vector $(0,2)$, and $C$ by the vector $(0,1)$. Then, we translate $A'$ by the vector $(1,\nicefrac{5}{2})$, $B'$ by the vector $(1,1)$, and $C'$ by the vector $(1,\nicefrac{7}{4})$. This gives the situation depicted in \cref{fig:main}. In this picture, the chain $A$ is contained in the small neighborhood of the point marked by $A$, and the slopes of this chain are close to the slope of the solid line passing through $A$. The same is true for the chains $B,C,A',B',C'$. In particular, for sufficiently small $\varepsilon$, the concatenation $\overbar{A} \coloneqq (A, B')$ of chains $A$ and $B'$ forms a south-east chain (because the dashed line from $A$ to $B'$ has slope greater than the slope of the solid line passing through $A$ but smaller than the slope of the solid line passing through $B'$). By the same reasoning, the concatenation $\overbar{B} \coloneqq (B, A')$ forms a south-east chain. Denote $A = (a^{(1)}, \dots, a^{(n)})$ and $A' = (a'^{(1)}, \dots, a'^{(n)})$. By the observation about rotation made above, the sequence $D \coloneqq (\frac{a^{(1)} + a'^{(1)}}{2}, \dots, \frac{a^{(n)} + a'^{(n)}}{2})$ is a south-east chain, and all the slopes of this chain are close to $\nicefrac{1}{\sqrt{3}}$. Moreover, this chain is contained in a small neighborhood of the point $(\nicefrac{1}{2},\nicefrac{5}{4})$. We marked this chain in \cref{fig:main} using the same conventions as for the remaining chains. By applying the same reasoning as above, the concatenation $\overbar{C} \coloneqq (C, D, C')$ is a south-east chain. To summarize, our construction shows the following statement. Given three south-east chains $A, B, C$ such that $C$ is included in the set of points $(A  + B)/2$, we can construct three south-east chains $\overbar{A}, \overbar{B}, \overbar{C}$ such that $\overbar{C}$ is included in $(\overbar{A} + \overbar{B})/2$ and $\card{\overbar{A}} = \card{\overbar{B}} = \card{A} + \card{B}$, $\card{\overbar{C}} = 2\card{C} + \card{A}$. Thus, if we suppose that $\card{A} = \card{B} = n$, then we have $\card{\overbar{A}} = \card{\overbar{B}} = 2n$ and $\card{\overbar{C}} = 2\card{C} + n$. By iterating this reasoning, we obtain the claimed bound $\Theta(n \log n)$.

Before presenting a formal proof, let us discuss the types of graph drawings that we obtain in this way. Here, we are interested in a drawing $f \colon (U \dunion V) \to \R^{2}$ of a bipartite graph $G = (U \dunion V, E)$ such that $f(U)$ is a south-east chain, $f(V)$ is a south-east chain, and the midpoints $\{(f(u) + f(v))/2 \colon (u,v) \in E\}$ also form a south-east chain. The construction described above implies that if $G$ is drawable in this way and $G' \coloneqq (U' \dunion V', E')$ is a copy of $G$, then the graph $\overbar{G} \coloneqq (\overbar{U} \dunion \overbar{V}, \overbar{E})$ defined as $\overbar{U} \coloneqq U \dunion V'$, $\overbar{V} \coloneqq V \dunion U'$, and
\[
\overbar{E} \coloneqq E \dunion E' \dunion \{(u,u') \colon u \in U\}
\]
is also drawable in this fashion. By starting from a graph
\[
G_{1} = \bigl(\{u_{1}, u_{2}\} \dunion \{v_{1}, v_{2}\}, \{(u_{1}, v_{1}), (u_{2}, v_{1}), (u_{2}, v_{2}) \} \bigr) \,
\]
and iterating the procedure, we obtain a family $(G_{k})_{k \ge 1}$ of drawable graphs, each having $2^{k+1}$ vertices and $(k+2)2^{k-1}$ edges. \Cref{fig:graphs} depicts this family of graphs for $k \le 3$ and \cref{fig:g3} depicts a drawing of $G_{3}$ using south-east chains.

\begin{figure}[t]
\begin{minipage}{\linewidth}
\centering
\begin{tikzpicture}[scale = 0.6, min/.style={draw,fill=black, scale=0.5, shape=circle}]
      \node[min] (a) at (0,0){};
      \node[min] (b) at (2,0){};
      \node[min] (c) at (0,2){};
      \node[min] (d) at (2,2){};
      \draw[thick] (a) -- (c); \draw[thick] (b) -- (c); \draw[thick] (b) -- (d);   
      
      \node[min] (a1) at (6,0){};
      \node[min] (b1) at (8,0){};
      \node[min] (c1) at (6,2){};
      \node[min] (d1) at (8,2){};
      \draw[thick] (a1) -- (c1); \draw[thick] (b1) -- (c1); \draw[thick] (b1) -- (d1);    
      \node[min] (a2) at (10,0){};
      \node[min] (b2) at (12,0){};
      \node[min] (c2) at (10,2){};
      \node[min] (d2) at (12,2){};
      \draw[thick] (a2) -- (c2); \draw[thick] (a2) -- (d2); \draw[thick] (b2) -- (d2);
      \draw[thick] (a1) -- (c2); \draw[thick] (b1) -- (d2);
\end{tikzpicture}
\end{minipage}\\[0.6cm]
\begin{minipage}{\linewidth}
\centering
\begin{tikzpicture}[scale = 0.6, min/.style={draw,fill=black, scale=0.5, shape=circle}]
      \node[min] (a1) at (0,0){};
      \node[min] (b1) at (2,0){};
      \node[min] (c1) at (0,2){};
      \node[min] (d1) at (2,2){};
      \draw[thick] (a1) -- (c1); \draw[thick] (b1) -- (c1); \draw[thick] (b1) -- (d1);    
      \node[min] (a2) at (4,0){};
      \node[min] (b2) at (6,0){};
      \node[min] (c2) at (4,2){};
      \node[min] (d2) at (6,2){};
      \draw[thick] (a2) -- (c2); \draw[thick] (a2) -- (d2); \draw[thick] (b2) -- (d2);
      \draw[thick] (a1) -- (c2); \draw[thick] (b1) -- (d2);
      
      \node[min] (a3) at (8,0){};
      \node[min] (b3) at (10,0){};
      \node[min] (c3) at (8,2){};
      \node[min] (d3) at (10,2){};
      \draw[thick] (a3) -- (c3); \draw[thick] (a3) -- (d3); \draw[thick] (b3) -- (d3);    
      \node[min] (a4) at (12,0){};
      \node[min] (b4) at (14,0){};
      \node[min] (c4) at (12,2){};
      \node[min] (d4) at (14,2){};
      \draw[thick] (a4) -- (c4); \draw[thick] (b4) -- (c4); \draw[thick] (b4) -- (d4);
      \draw[thick] (c3) -- (a4); \draw[thick] (d3) -- (b4);
      
      \draw[thick] (a1) -- (c3); \draw[thick] (b1) -- (d3); \draw[thick] (a2) -- (c4); \draw[thick] (b2) -- (d4);
\end{tikzpicture}
\end{minipage}
         
\caption{Graphs that can be drawn using south-east chains.}\label{fig:graphs}
\end{figure}

\begin{figure}[t]
\begin{tikzpicture}[scale = 0.25, min/.style={draw,fill=black, scale=0.5, shape=circle}, max/.style={draw,fill=black, scale=0.5, shape=circle}, av/.style={draw,fill=black, scale=0.35, shape=circle}]

      \node[min] (a1) at (0,0){};
      \node[min] (a2) at (1.25, 0.2){};
      \node[min] (a3) at (13.6, 2.35){};
      \node[min] (a4) at (18.48, 3.54){};
      \node[min] (a5) at (36.93, 8.11){};
      \node[min] (a6) at (39.5, 9.38){};
      \node[min] (a7) at (42.65,11.49){};
      \node[min] (a8) at (43.93,12.51){};
      
      \node[max] (b1) at (4.0, -15.0){};
      \node[max] (b2) at (6.26, -14.8){};
      \node[max] (b3) at (12.69, -13.27){};
      \node[max] (b4) at (16.16, -12.3){};
      \node[max] (b5) at (32.38, -3.99){};
      \node[max] (b6) at (35.8, -2.22){};
      \node[max] (b7) at (41.01, 0.54){};
      \node[max] (b8) at (42.81, 1.81){};
      
      \draw[dashed] (a1) -- (b1); \draw[dashed] (a1) -- (b2); \draw[dashed] (a2) -- (b2); 
      \draw[dashed] (a3) -- (b1); \draw[dashed] (a3) -- (b3); \draw[dashed] (a4) -- (b2); 
      \draw[dashed] (a4) -- (b3); \draw[dashed] (a4) -- (b4); \draw[dashed] (a5) -- (b1);
      \draw[dashed] (a5) -- (b5); \draw[dashed] (a5) -- (b7); \draw[dashed] (a6) -- (b2);
      \draw[dashed] (a6) -- (b5); \draw[dashed] (a6) -- (b6); \draw[dashed] (a6) -- (b8); 
      \draw[dashed] (a7) -- (b3); \draw[dashed] (a7) -- (b7); \draw[dashed] (a7) -- (b8); 
      \draw[dashed] (a8) -- (b4); \draw[dashed] (a8) -- (b8);
      
      \node[av] (c1) at ($(a1) !0.5! (b1)$) {}; 
      \node[av] (c2) at ($(a1) !0.5! (b2)$) {};
      \node[av] (c3) at ($(a2) !0.5! (b2)$) {};
      \node[av] (c4) at ($(a3) !0.5! (b1)$) {};
      \node[av] (c5) at ($(a3) !0.5! (b3)$) {};
      \node[av] (c6) at ($(a4) !0.5! (b2)$) {};
      \node[av] (c7) at ($(a4) !0.5! (b3)$) {};
      \node[av] (c8) at ($(a4) !0.5! (b4)$) {};
      \node[av] (c9) at ($(a5) !0.5! (b1)$) {};
      \node[av] (c10) at ($(a5) !0.5! (b5)$) {};
      \node[av] (c11) at ($(a5) !0.5! (b7)$) {};
      \node[av] (c12) at ($(a6) !0.5! (b2)$) {};
      \node[av] (c13) at ($(a6) !0.5! (b5)$) {};
      \node[av] (c14) at ($(a6) !0.5! (b6)$) {};
      \node[av] (c15) at ($(a6) !0.5! (b8)$) {};
      \node[av] (c16) at ($(a7) !0.5! (b3)$) {};
      \node[av] (c17) at ($(a7) !0.5! (b7)$) {};
      \node[av] (c18) at ($(a7) !0.5! (b8)$) {};
      \node[av] (c19) at ($(a8) !0.5! (b4)$) {};
      \node[av] (c20) at ($(a8) !0.5! (b8)$) {};

\end{tikzpicture}
\caption{Drawing of $G_{3}$ using south-east chains.}\label{fig:g3}
\end{figure}

In the remaining part of this section, we give a formal proof of the argument described above. Let $R \colon \R^{2} \to \R^{2}$ denote the counterclockwise rotation by $60$ degrees centered at zero, i.e., the linear transformation given by the matrix
\[
R \coloneqq \frac{1}{2}\begin{bmatrix}
1 & -\sqrt{3} \\
\sqrt{3} & 1
\end{bmatrix} \, .
\]
Furthermore, if $a \coloneqq (a_{1}, a_{2}), b \coloneqq (b_{1}, b_{2}) \in \R^{2}$ are two points such that $a_{1} < b_{1}$ and $a_{2} < b_{2}$, then we denote by
\[
\slope(a,b) \coloneqq \frac{b_{2} - a_{2}}{b_{1} - a_{1}}
\]
the corresponding slope of the segment $[a, b]$. The following lemma, which can be proven using elementary trigonometric identities, gathers the properties of rotation that were mentioned above.

\begin{lemma}\label{le:rotation}
Suppose that two points $a \coloneqq (a_{1}, a_{2}), b \coloneqq (b_{1}, b_{2}) \in \R^{2}$ are such that $a_{1} < b_{1}$, $a_{2} < b_{2}$, and $\slope(a, b) < \frac{1}{\sqrt{3}} = \tan(\frac{\pi}{6})$. Let $\theta \coloneqq \arctan\bigl(\slope(a,b)\bigr) < \frac{\pi}{6}$ and denote $\tilde{a} = R(a)$, $\tilde{b} = R(b)$. Then, we have $\tilde{a}_{1} < \tilde{b}_{1}$, $\tilde{a}_{2} < \tilde{b}_{2}$,
\[
\slope(\tilde{a}, \tilde{b}) = \tan(\frac{\pi}{3} + \theta)\, , \quad \text{and} \ \quad \slope\Bigl(\frac{a + \tilde{a}}{2}, \frac{b + \tilde{b}}{2}\Bigr) = \tan(\frac{\pi}{6} + \theta) \, .
\]
\end{lemma}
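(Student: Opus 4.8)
The plan is to reduce everything to the single difference vector $v \coloneqq b - a$. By hypothesis $v_{1} > 0$, $v_{2} > 0$, and $v_{2}/v_{1} = \slope(a,b) = \tan\theta$ with $\theta \in (0, \tfrac{\pi}{6})$, so we may write $v = r(\cos\theta, \sin\theta)$ for some $r > 0$. Since $R$ is linear, $\tilde{b} - \tilde{a} = R(b) - R(a) = R(v)$, and since $R$ is the rotation by $\tfrac{\pi}{3}$ we get $R(v) = r\bigl(\cos(\theta + \tfrac{\pi}{3}), \sin(\theta + \tfrac{\pi}{3})\bigr)$. Because $\theta + \tfrac{\pi}{3} \in (\tfrac{\pi}{3}, \tfrac{\pi}{2})$, both coordinates of $R(v)$ are strictly positive, which gives $\tilde{a}_{1} < \tilde{b}_{1}$ and $\tilde{a}_{2} < \tilde{b}_{2}$; dividing the two coordinates yields $\slope(\tilde{a}, \tilde{b}) = \tan(\tfrac{\pi}{3} + \theta)$.

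For the midpoint claim, I would observe that $\tfrac{b + \tilde{b}}{2} - \tfrac{a + \tilde{a}}{2} = \tfrac{1}{2}\bigl(v + R(v)\bigr) = \tfrac{1}{2}(I + R)v$. The key identity is that $I + R$ equals $\sqrt{3}$ times the rotation by $\tfrac{\pi}{6}$: indeed, from the half-angle formulas $1 + \cos\tfrac{\pi}{3} = 2\cos^{2}\tfrac{\pi}{6}$ and $\sin\tfrac{\pi}{3} = 2\sin\tfrac{\pi}{6}\cos\tfrac{\pi}{6}$, so
\[
I + R = \begin{bmatrix} 1 + \cos\tfrac{\pi}{3} & -\sin\tfrac{\pi}{3} \\ \sin\tfrac{\pi}{3} & 1 + \cos\tfrac{\pi}{3} \end{bmatrix} = 2\cos\tfrac{\pi}{6} \begin{bmatrix} \cos\tfrac{\pi}{6} & -\sin\tfrac{\pi}{6} \\ \sin\tfrac{\pi}{6} & \cos\tfrac{\pi}{6} \end{bmatrix}, \qquad 2\cos\tfrac{\pi}{6} = \sqrt{3}
\]
(this can equally well be checked directly from the displayed matrix for $R$). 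Hence $\tfrac{1}{2}(I + R)v = \tfrac{\sqrt{3}}{2}\, r\bigl(\cos(\theta + \tfrac{\pi}{6}), \sin(\theta + \tfrac{\pi}{6})\bigr)$, and since $\theta + \tfrac{\pi}{6} \in (\tfrac{\pi}{6}, \tfrac{\pi}{3})$ both coordinates are again positive, so the slope of $\bigl[\tfrac{a + \tilde{a}}{2}, \tfrac{b + \tilde{b}}{2}\bigr]$ is well defined and equals $\tan(\tfrac{\pi}{6} + \theta)$.

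There is no genuine obstacle here — the statement is pure planar trigonometry — and the only points requiring care are (i) getting the identity $I + R = \sqrt{3}\,R_{\pi/6}$ right, equivalently that averaging a vector with its $60^{\circ}$ rotation is the same as rotating by $30^{\circ}$ and scaling by $\tfrac{\sqrt{3}}{2}$, and (ii) tracking the angle ranges $(0,\tfrac{\pi}{6}) \mapsto (\tfrac{\pi}{3},\tfrac{\pi}{2})$ and $(0,\tfrac{\pi}{6}) \mapsto (\tfrac{\pi}{6},\tfrac{\pi}{3})$, which is exactly where the hypothesis $\slope(a,b) < \tfrac{1}{\sqrt{3}}$ is used, so that $\tilde{b}_{1} - \tilde{a}_{1}$ stays positive and the slopes are well defined.
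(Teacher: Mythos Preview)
Your proof is correct. The paper's own argument is a direct coordinate computation: it writes out $\tilde a,\tilde b$ from the matrix of $R$, checks the inequalities, and then verifies each slope identity by plugging into the tangent addition formula $\tan(\alpha+\theta)=\frac{\tan\alpha+\tan\theta}{1-\tan\alpha\tan\theta}$ for $\alpha=\tfrac{\pi}{3}$ and $\alpha=\tfrac{\pi}{6}$. Your route is the more geometric phrasing of the same content: pass to the difference vector in polar form $v=r(\cos\theta,\sin\theta)$, use that $R$ adds $\tfrac{\pi}{3}$ to the angle, and for the midpoint recognize the matrix identity $I+R=\sqrt{3}\,R_{\pi/6}$ (equivalently, that averaging a vector with its $60^\circ$ rotation is a $30^\circ$ rotation times $\tfrac{\sqrt3}{2}$). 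The two arguments are interchangeable --- yours hides the tangent addition formula inside the rotation-composition law, while the paper makes it explicit --- but your packaging via $I+R=2\cos\tfrac{\pi}{6}\cdot R_{\pi/6}$ is a clean way to see at once why the midpoint slope lands exactly at the half angle, and it makes the range checks $(0,\tfrac{\pi}{6})\mapsto(\tfrac{\pi}{3},\tfrac{\pi}{2})$ and $(0,\tfrac{\pi}{6})\mapsto(\tfrac{\pi}{6},\tfrac{\pi}{3})$ transparent.
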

\begin{proof}
We have $\tilde{a} = \frac{1}{2}(a_{1} - \sqrt{3}a_{2}, \sqrt{3}a_{1} + a_{2})$ and $\tilde{b} = \frac{1}{2}(b_{1} - \sqrt{3}b_{2}, \sqrt{3}b_{1} + b_{2})$. The inequality $\tilde{a}_{2} < \tilde{b}_{2}$ is trivial. Moreover, $\tilde{b}_{1} > \tilde{a}_{1} \iff b_{1} - a_{1} > \sqrt{3}(b_{2} - a_{2})$, which is true by our assumptions. Furthermore, we have
\begin{align*}
\tan(\frac{\pi}{3} + \theta) &= \frac{\tan(\frac{\pi}{3}) + \tan(\theta)}{1 - \tan(\frac{\pi}{3})\tan(\theta)}
= \frac{\sqrt{3}(b_{1} - a_{1}) + b_{2} - a_{2}}{b_{1} - a_{1} - \sqrt{3}(b_{2} - a_{2})} \\ 
&= \frac{\tilde{b}_{2} - \tilde{a}_{2}}{\tilde{b}_{1} - \tilde{a}_{1}} = \slope(\tilde{a}, \tilde{b}) \, .
\end{align*}
Similarly,
\begin{align*}
\tan(\frac{\pi}{6} + \theta) &= \frac{\tan(\frac{\pi}{6}) + \tan(\theta)}{1 - \tan(\frac{\pi}{6})\tan(\theta)} = \frac{b_{1}-a_{1} + \sqrt{3}(b_{2} - a_{2})}{\sqrt{3}(b_{1} - a_{1}) - (b_{2} - a_{2})} \\
&= \frac{b_{2} + \tilde{b}_{2} - a_{2} - \tilde{a}_{2}}{b_{1} + \tilde{b}_{1} - a_{1} - \tilde{a}_{1}} = \slope\Bigl(\frac{a + \tilde{a}}{2}, \frac{b + \tilde{b}}{2}\Bigr) . \qedhere
\end{align*}
\end{proof}

For any $\varepsilon > 0$ we denote by $L_{\varepsilon} \colon \R^{2} \to \R^{2}$ the linear transformation $L_{\varepsilon}(x,y) \coloneqq (\varepsilon x, \varepsilon^{2} y)$. As a corollary of \cref{le:rotation} we may now prove the properties of the three transformations of south-east chains discussed above. To improve readability, we use the following notation: if $A$ is a sequence, then we denote by $A(k)$ its $k$th element, so that $A = \bigl(A(1), \dots, A(n)\bigr)$. If $A$ is a south-east chain of length $n$ and $\varepsilon > 0$, then we consider the following three sequences:
\begin{equation}\label{eq:chains}
\begin{aligned}
A_{\varepsilon} &\coloneqq \Bigl( L_{\varepsilon}\bigl(A(1)\bigr), \dots, L_{\varepsilon}\bigl(A(n)\bigr) \Bigr) \, , \\
A'_{\varepsilon} &\coloneqq \Bigl( R\bigl(A_{\varepsilon}(1)\bigr), \dots, R\bigl(A_{\varepsilon}(n)\bigr) \Bigr) \, , \\
A''_{\varepsilon} &\coloneqq \Bigl( \frac{A_{\varepsilon}(1) + A'_{\varepsilon}(1)}{2}, \dots, \frac{A_{\varepsilon}(n) + A'_{\varepsilon}(n)}{2} \Bigr) \, .
\end{aligned}
\end{equation}
Using this notation, $A_{\varepsilon}$ is a chain obtained by flattening $A$, $A'_{\varepsilon}$ is the rotated version of this flattened chain, and $A''_{\varepsilon}$ is the chain formed by taking the midpoints of the two previous chains. The next result follows from \cref{le:rotation}.

\begin{lemma}\label{le:transformations}
Suppose that $A \coloneqq \bigl(A(1), \dots, A(n)\bigr)$ is a south-east chain. Then, for sufficiently small $\varepsilon > 0$, the sequences $A_{\varepsilon}, A'_{\varepsilon}, A''_{\varepsilon}$ are south-east chains. Moreover, for every $k \in [n-1]$ we have the equalities
\begin{align*}
&\lim_{\varepsilon \to 0^{+}} \slope\bigl(A_{\varepsilon}(k),A_{\varepsilon}(k+1)\bigr) = 0 \, , \\
&\lim_{\varepsilon \to 0^{+}} \slope\bigl(A'_{\varepsilon}(k),A'_{\varepsilon}(k+1)\bigr) = \tan(\frac{\pi}{3}) = \sqrt{3} \, , \\
&\lim_{\varepsilon \to 0^{+}} \slope\bigl(A''_{\varepsilon}(k),A''_{\varepsilon}(k+1)\bigr) = \tan(\frac{\pi}{6}) = \frac{1}{\sqrt{3}} \, .
\end{align*}
\end{lemma}
\begin{proof}
It is obvious that the sequence $A_{\varepsilon}$ is strictly increasing on both coordinates. Moreover, for every $k \in [n-1]$ we have $\slope\bigl(A_{\varepsilon}(k), A_{\varepsilon}(k+1)\bigr) = \varepsilon \slope\bigl(A(k), A(k+1)\bigr)$. Hence, $A_{\varepsilon}$ is a south-east chain and we have $\lim_{\varepsilon \to 0^{+}} \slope\bigl(A_{\varepsilon}(k),A_{\varepsilon}(k+1)\bigr) = 0$. To prove the claim for the remaining two sequences, note that for sufficiently small $\varepsilon > 0$, the inequality $\slope \bigl(A_{\varepsilon}(k), A_{\varepsilon}(k+1)\bigr) = \varepsilon \slope\bigl(A(k), A(k+1)\bigr) < \nicefrac{1}{\sqrt{3}}$ is satisfied for all $k \in [n-1]$. Hence, by \cref{le:rotation}, the sequence $A'_{\varepsilon} = \Bigl( R\bigl(A_{\varepsilon}(1)\bigr), \dots, R\bigl(A_{\varepsilon}(n)\bigr) \Bigr)$ is strictly increasing on both coordinates and the same is true for the sequence $A''_{\varepsilon} = \Bigl( \frac{A_{\varepsilon}(1) + A'_{\varepsilon}(1)}{2}, \dots, \frac{A_{\varepsilon}(n) + A'_{\varepsilon}(n)}{2} \Bigr)$. Moreover, if we denote $\theta^{(k,\varepsilon)} \coloneqq \arctan\Bigl( \slope\bigl(A_{\varepsilon}(k), A_{\varepsilon}(k+1)\bigr) \Bigr) < \frac{\pi}{6}$, then the sequence $\theta^{(1,\varepsilon)}, \dots, \theta^{(n-1,\varepsilon)}$ is strictly increasing and \cref{le:rotation} shows that
\[
\slope\bigl( A'_{\varepsilon}(k), A'_{\varepsilon}(k+1) \bigr) = \tan(\frac{\pi}{3} + \theta^{(k,\varepsilon)}) \,
\]
and
\[
\slope\bigl( A''_{\varepsilon}(k), A''_{\varepsilon}(k+1) \bigr) = \tan(\frac{\pi}{6} + \theta^{(k,\varepsilon)}) \, .
\]
In particular, the sequences $A'_{\varepsilon},A''_{\varepsilon}$ are south-east chains. Moreover, we have $\lim_{\varepsilon \to 0^+}\theta^{(k,\varepsilon)} = 0$ for all $k$, which gives the equalities
\begin{align*}
&\lim_{\varepsilon \to 0^{+}} \slope\bigl(A'_{\varepsilon}(k),A'_{\varepsilon}(k+1)\bigr) = \tan(\frac{\pi}{3}) = \sqrt{3} \, , \\
&\lim_{\varepsilon \to 0^{+}} \slope\bigl(A''_{\varepsilon}(k),A''_{\varepsilon}(k+1)\bigr) = \tan(\frac{\pi}{6}) = \frac{1}{\sqrt{3}} \, . \qedhere
\end{align*}
\end{proof}

We now show how the transformations given in \cref{le:transformations} can be used to prove \cref{th:main}. To do so, we take three south-east chains $A \coloneqq \bigl(A(1), \dots, A(n)\bigr)$, $B \coloneqq \bigl(B(1), \dots, B(n)\bigr)$, and $C \coloneqq \bigl(C(1), \dots, C(m)\bigr)$ such that $C \subset (A + B)/2$. As discussed before, we let $u \coloneqq (1,\nicefrac{5}{2})$, $v \coloneqq (0,2)$, $w \coloneqq (1,1)$ and we consider the chains $A_{\varepsilon}, A'_{\varepsilon}, B_{\varepsilon}, B'_{\varepsilon}$ defined as in~\cref{eq:chains}.

\begin{lemma}\label{le:construction}
If $\varepsilon > 0$ is sufficiently small, then the sequences $\overbar{A}_{\varepsilon} \coloneqq (A_{\varepsilon}, w + B'_{\varepsilon})$, $\overbar{B}_{\varepsilon} \coloneqq (v + B_{\varepsilon}, u + A'_{\varepsilon})$ are south-east chains. Moreover, the set $(\overbar{A}_{\varepsilon} + \overbar{B}_{\varepsilon})/2$ contains a south-east chain of length at least $2m + n$.
\end{lemma}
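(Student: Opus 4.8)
The plan is to exhibit explicitly a south-east chain $\overbar{C}_{\varepsilon}$ of length $2m+n$ lying inside $(\overbar{A}_{\varepsilon}+\overbar{B}_{\varepsilon})/2$, built by concatenating three translated copies of the chains $C_{\varepsilon}$, $A''_{\varepsilon}$ and $C'_{\varepsilon}$ from~\cref{eq:chains} (the first and third obtained by applying the transformations of~\cref{eq:chains} to $C$ rather than to $A$). The engine of the argument is that $L_{\varepsilon}$ and $R$ are linear, hence commute with taking midpoints of pairs of points; once the correct blocks of $\overbar{A}_{\varepsilon}$ and $\overbar{B}_{\varepsilon}$ are lined up, the whole statement reduces to a finite list of strict inequalities that hold in the limit $\varepsilon\to0^{+}$, and \cref{re:limit_slopes} together with $L_{\varepsilon}(x)\to0$ makes those limits transparent.

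First I would record the preliminaries. Applying \cref{le:transformations} to $A$, to $B$ and to $C$, and using that $L_{\varepsilon}$ trivially preserves south-east chains and their coordinatewise monotonicity, for all sufficiently small $\varepsilon>0$ the sequences $A_{\varepsilon},B_{\varepsilon},A'_{\varepsilon},B'_{\varepsilon},A''_{\varepsilon}$ as well as $C_{\varepsilon}$ and $C'_{\varepsilon}$ are south-east chains; by \cref{re:limit_slopes} each of them converges to the point $0$ as $\varepsilon\to0^{+}$, with consecutive slopes tending to $0$ for the unprimed chains, to $\sqrt{3}$ for the primed chains, and to $1/\sqrt{3}$ for $A''_{\varepsilon}$. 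I would also isolate the obvious concatenation principle: if $P=(p^{(1)},\dots,p^{(r)})$ and $Q=(q^{(1)},\dots,q^{(s)})$ are south-east chains with $p^{(r)}_{1}<q^{(1)}_{1}$, $p^{(r)}_{2}<q^{(1)}_{2}$, and $\slope(p^{(r-1)},p^{(r)})<\slope(p^{(r)},q^{(1)})<\slope(q^{(1)},q^{(2)})$, then the concatenation $(P,Q)$ is again a south-east chain; iterating this deals with concatenations of three chains.

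Next I would verify that $\overbar{A}_{\varepsilon}=(A_{\varepsilon},\,w+B'_{\varepsilon})$ and $\overbar{B}_{\varepsilon}=(v+B_{\varepsilon},\,u+A'_{\varepsilon})$ are south-east chains. For $\overbar{A}_{\varepsilon}$, the bridging segment joins a small neighbourhood of $0=(0,0)$ to a small neighbourhood of $w=(1,1)$, so in the limit $\varepsilon\to0^{+}$ it has slope $1$, strictly between the limiting slopes $0$ of $A_{\varepsilon}$ and $\sqrt{3}$ of $w+B'_{\varepsilon}$; for $\overbar{B}_{\varepsilon}$ the bridging segment joins a neighbourhood of $v=(0,2)$ to one of $u=(1,5/2)$ and has limiting slope $1/2\in(0,\sqrt{3})$. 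The limiting coordinate orderings ($0<1$ in both coordinates for $\overbar{A}_{\varepsilon}$; $0<1$ and $2<5/2$ for $\overbar{B}_{\varepsilon}$) are likewise strict, so the concatenation principle applies for $\varepsilon$ small enough.

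Finally — the only place that needs care — I would exhibit the chain inside the Minkowski sum. Since $C\subset(A+B)/2$, fix for each $l\in[m]$ indices $i_{l},j_{l}\in[n]$ with $c^{(l)}=(a^{(i_{l})}+b^{(j_{l})})/2$; applying the linear maps $L_{\varepsilon}$ and $R\circ L_{\varepsilon}$ gives $C_{\varepsilon}(l)=\tfrac12(A_{\varepsilon}(i_{l})+B_{\varepsilon}(j_{l}))$ and $C'_{\varepsilon}(l)=\tfrac12(A'_{\varepsilon}(i_{l})+B'_{\varepsilon}(j_{l}))$. Reading off the two length-$n$ blocks of $\overbar{A}_{\varepsilon}$ and of $\overbar{B}_{\varepsilon}$, this yields, for $l\in[m]$ and $k\in[n]$,
\begin{align*}
\tfrac{v}{2}+C_{\varepsilon}(l)&=\tfrac12\bigl(\overbar{A}_{\varepsilon}(i_{l})+\overbar{B}_{\varepsilon}(j_{l})\bigr), \\
\tfrac{u}{2}+A''_{\varepsilon}(k)&=\tfrac12\bigl(\overbar{A}_{\varepsilon}(k)+\overbar{B}_{\varepsilon}(n+k)\bigr), \\
\tfrac{u+w}{2}+C'_{\varepsilon}(l)&=\tfrac12\bigl(\overbar{A}_{\varepsilon}(n+j_{l})+\overbar{B}_{\varepsilon}(n+i_{l})\bigr),
\end{align*}
so the three translated chains $\tfrac{v}{2}+C_{\varepsilon}$, $\tfrac{u}{2}+A''_{\varepsilon}$, $\tfrac{u+w}{2}+C'_{\varepsilon}$ (of lengths $m$, $n$, $m$) all lie in $(\overbar{A}_{\varepsilon}+\overbar{B}_{\varepsilon})/2$. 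Each is a translate of a south-east chain, collapsing as $\varepsilon\to0^{+}$ to $v/2=(0,1)$, $u/2=(1/2,5/4)$, $(u+w)/2=(1,7/4)$ respectively, with limiting slopes $0$, $1/\sqrt{3}$, $\sqrt{3}$. For their concatenation $\overbar{C}_{\varepsilon}$, the first bridging segment has limiting slope $\tfrac{5/4-1}{1/2}=\tfrac12\in(0,1/\sqrt{3})$ and the second has limiting slope $\tfrac{7/4-5/4}{1/2}=1\in(1/\sqrt{3},\sqrt{3})$, while the limiting first coordinates obey $0<1/2<1$ and the limiting second coordinates $1<5/4<7/4$; since all these are strict, the concatenation principle (applied twice) shows that for $\varepsilon$ small enough $\overbar{C}_{\varepsilon}$ is a south-east chain of length $m+n+m=2m+n$ contained in $(\overbar{A}_{\varepsilon}+\overbar{B}_{\varepsilon})/2$, as required. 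I expect the only real hazard to be bookkeeping — keeping the translation vectors $v/2$, $u/2$, $(u+w)/2$ consistent with the block decomposition of $\overbar{A}_{\varepsilon}$, $\overbar{B}_{\varepsilon}$ — rather than anything analytic, since every constraint is a strict inequality stable under small perturbations of $\varepsilon$.
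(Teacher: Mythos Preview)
Your proposal is correct and follows essentially the same approach as the paper: the paper builds the chain $D_{\varepsilon}=(t+C_{\varepsilon},\,s+A''_{\varepsilon},\,z+C'_{\varepsilon})$ with $t=v/2$, $s=u/2$, $z=(u+w)/2$ and verifies the same limiting slopes $0<\tfrac12<\tfrac{1}{\sqrt{3}}<1<\sqrt{3}$ at the two junctions. Your version is slightly more explicit in tracking the indices $i_{l},j_{l}$ witnessing $C\subset(A+B)/2$ and the block decomposition of $\overbar{A}_{\varepsilon},\overbar{B}_{\varepsilon}$, but the argument is otherwise identical.
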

In the statement above, $w + B'_{\varepsilon}$ denotes the sequence obtained by translating every element of $B'_{\varepsilon}$ by the vector $w$ and $(A_{\varepsilon}, w + B'_{\varepsilon})$ denotes the concatenation of two sequences. The same applies to $(v + B_{\varepsilon}, u + A'_{\varepsilon})$.
\begin{proof}[Proof of \cref{le:construction}]
We start by proving that $\overbar{A}_{\varepsilon}$ is a south-east chain. By \cref{le:transformations}, the sequences $A_{\varepsilon}$ and $B'_{\varepsilon}$ are south-east chains for sufficiently small $\varepsilon$. Hence, the sequence $w + B'_{\varepsilon}$ is also a south-east chain. Furthermore, we have $\lim_{\varepsilon \to 0^{+}} \bigl(A_{\varepsilon}(n) \bigr) = (0,0)$ and $\lim_{\varepsilon \to 0^{+}} \bigl(w + B'_{\varepsilon}(1)\bigr) = w = (1,1)$. In particular, for sufficiently small $\varepsilon$, the sequence $\overbar{A}_{\varepsilon}$ is strictly increasing on both coordinates. Moreover, \cref{le:transformations} shows the equalities $\lim_{\varepsilon \to 0^{+}} \slope\bigl( A_{\varepsilon}(n-1), A_{\varepsilon}(n) \bigr) = 0$ and $\lim_{\varepsilon \to 0^{+}} \slope\bigl( w + B'_{\varepsilon}(1), w + B'_{\varepsilon}(2) \bigr) = \sqrt{3}$. We also have
\[
\lim_{\varepsilon \to 0^{+}} \slope\bigl( A_{\varepsilon}(n) , w + B'_{\varepsilon}(1) \bigr) = \slope(0, w) = 1 \, .
\]
Since $0 < 1 < \sqrt{3}$, the sequence $\overbar{A}_{\varepsilon}$ is a south-east chain for sufficiently small $\varepsilon$. The proof for $\overbar{B}_{\varepsilon}$ is analogous---it is enough to observe that $\lim_{\varepsilon \to 0^{+}} \bigl( v + B_{\varepsilon}(n) \bigr) = (0,2)$ and $\lim_{\varepsilon \to 0^{+}} \bigl(u + A'_{\varepsilon}(1)\bigr) = (1,\nicefrac{5}{2})$ to show that $\overbar{B}_{\varepsilon}$ is strictly increasing on both coordinates. Then, $\overbar{B}_{\varepsilon}$ is a south-east chain by the equalities $\lim_{\varepsilon \to 0^{+}} \slope\bigl( v + B_{\varepsilon}(n-1), v + B_{\varepsilon}(n) \bigr) = 0$, $\lim_{\varepsilon \to 0^{+}} \slope\bigl( u + A'_{\varepsilon}(1), u + A'_{\varepsilon}(2) \bigr) = \sqrt{3}$, and 
\[
\lim_{\varepsilon \to 0^{+}} \slope\bigl( v + B_{\varepsilon}(n), u + A'_{\varepsilon}(1) \bigr) = \slope(v, u) = \frac{1}{2} \, .
\]
It remains to show that $(\overbar{A}_{\varepsilon} + \overbar{B}_{\varepsilon})/2$ contains a south-east chain of length at least $2m + n$. To do so, consider the chain $C \subset (A + B)/2$, $|C| = m$, and let $C_{\varepsilon}, C'_{\varepsilon}$ be defined as in~\cref{eq:chains}. Furthermore, let $t \coloneqq (0,1) = v/2$ and $z \coloneqq (1,\nicefrac{7}{4}) = (u + w)/2$. Since the transformations $L_{\varepsilon}$ and $R$ are linear, we have $t + C_{\varepsilon} \subset (A_{\varepsilon} + v + B_{\varepsilon})/2 \subset (\overbar{A}_{\varepsilon} + \overbar{B}_{\varepsilon})/2$ and $z + C'_{\varepsilon} \subset (u + A'_{\varepsilon} + w + B'_{\varepsilon})/2 \subset (\overbar{A}_{\varepsilon} + \overbar{B}_{\varepsilon})/2$. Moreover, we define the chain $A''_{\varepsilon}$ as in~\cref{eq:chains}, we let $s \coloneqq (\nicefrac{1}{2},\nicefrac{5}{4}) = u/2$, and we note that $s + A''_{\varepsilon} \subset (A_{\varepsilon} + u + A'_{\varepsilon})/2 \subset (\overbar{A}_{\varepsilon} + \overbar{B}_{\varepsilon})/2$. Hence, the sequence $D_{\varepsilon} \coloneqq (t + C_{\varepsilon}, s + A''_{\varepsilon}, z + C'_{\varepsilon})$ is contained in $(\overbar{A}_{\varepsilon} + \overbar{B}_{\varepsilon})/2$ and its length is equal to $2m + n$. Therefore, it is enough to prove that $D_{\varepsilon}$ is a south-east chain. The proof is similar to the proofs before. By \cref{le:transformations}, the sequences $t + C_{\varepsilon}$, $s + A''_{\varepsilon}$, and $z + C'_{\varepsilon}$ are south-east chains for sufficiently small $\varepsilon$. Moreover, we have the equalities $\lim_{\varepsilon \to 0^{+}} \bigl( t + C_{\varepsilon}(m) \bigr) = (0,1)$, $\lim_{\varepsilon \to 0^{+}} \bigl( s + A''_{\varepsilon}(1) \bigr) = \lim_{\varepsilon \to 0^{+}} \bigl( s + A''_{\varepsilon}(n) \bigr) = (\nicefrac{1}{2},\nicefrac{5}{4})$, and $\lim_{\varepsilon \to 0^{+}} \bigl( z + C'_{\varepsilon}(1) \bigr) = (1,\nicefrac{7}{4})$, which show that $D_{\varepsilon}$ is strictly increasing on both coordinates for sufficiently small $\varepsilon$. Furthermore, we use \cref{le:transformations} to observe that $\lim_{\varepsilon \to 0^{+}} \slope\bigl( t + C_{\varepsilon}(m-1), t + C_{\varepsilon}(m) \bigr) = 0$, $\lim_{\varepsilon \to 0^{+}} \slope\bigl( s + A''_{\varepsilon}(1), s + A''_{\varepsilon}(2) \bigr) = \lim_{\varepsilon \to 0^{+}} \slope\bigl( s + A''_{\varepsilon}(n-1), s + A''_{\varepsilon}(n) \bigr) = \frac{1}{\sqrt{3}}$, and $\lim_{\varepsilon \to 0^{+}} \slope\bigl( z + C'_{\varepsilon}(1), z + C'_{\varepsilon}(2) \bigr) = \sqrt{3}$. To finish, we have
\begin{align*}
&\lim_{\varepsilon \to 0^{+}} \slope\bigl( t + C_{\varepsilon}(m), s + A''_{\varepsilon}(1) \bigr) = \slope(t,s) = \frac{1}{2} \, , \\
&\lim_{\varepsilon \to 0^{+}} \slope\bigl( s + A''_{\varepsilon}(n), z + C'_{\varepsilon}(1) \bigr) = \slope(s,z) = 1 \, ,
\end{align*}
and $0 < \frac{1}{2} < \frac{1}{\sqrt{3}} < 1 < \sqrt{3}$. Hence, for sufficiently small $\varepsilon$, the sequence $D_{\varepsilon}$ is a south-east chain.
\end{proof}

As a corollary, we may prove our main theorem.

\begin{proof}[Proof of \cref{th:main}]
As noted in the introduction, it is enough to prove the claim for $(P_{k} + Q_{k})/2$ instead of $P_{k} + Q_{k}$. We show, by induction over $k$, that we can find two south-east chains $P_{k}$, $Q_{k}$, each of length $2^{k}$, such that $(P_{k} + Q_{k})/2$ contains a south-east chain of length $(k+2)2^{k-1}$. Let $P_{1} \coloneqq \bigl( (0,0), (2,1)\bigr)$ and $Q_{1} \coloneqq ( (0,2), (2,4) )$. The sequences $P_{1}$ and $Q_{1}$ are south-east chains and the set $(P_{1} + Q_{1})/2$ contains a south-east chain of length three. Moreover, given $P_{k}, Q_{k}$ we may apply \cref{le:construction} to obtain two south-east chains $P_{k+1}, Q_{k+1}$, each of length $2^{k+1}$, such that $(P_{k+1} + Q_{k+1})/2$ contains a south-east chain of length at least $(k+2)2^{k} + 2^{k} = (k+3)2^{k}$. Therefore, the claim follows from \cref{le:hull_of_chain}.
\end{proof}

\section{Final remarks}

Let us discuss some natural questions for further research. Firstly, we do not know how far from optimal is our construction. More precisely, consider the function $f \colon \N^{*} \to \N^{*}$ defined as 
\begin{align*}
f(n) \coloneqq \max\{\ci(P + Q) \colon &P,Q \subset \R^{2} \text{ are convexly independent} \\ &\text{and } \card{P} = \card{Q} = n \} \, .
\end{align*}
By joining our analysis with the result of Tiwary~\cite{tiwary_convex}, we obtain the asymptotic bound $f(n) = \Theta(n \log n)$. One can ask for an optimal constant $C$ such that $f(n) \le C n \log n$. Secondly, we wonder if our family of graphs $G_{k} = (U_{k} \dunion V_{k}, E_{k})$ can be still embedded in the plane if we impose a stronger condition on the midpoints of the edges. For instance, we may ask for an embedding such that $U_{k}$ and $V_{k}$ are south-east chains and the midpoints of $E_{k}$ are contained in some convex curve of small degree, such as a circle or a parabola. Our interest in this question is motivated by the following observation: if $G_{k}$ are realizable in the south-east quadrant of the plane in such a way that the midpoints of $E_{k}$ are on the unit circle, then we obtain a $\Theta(n \log n)$ bound for the convex version of the unit distance problem, proposed by Erd\H{o}s and Moser~\cite{erdos_moser_unit} (see~\cite{furedi_unit_distance,aggarwal_unit} for more information on lower and upper bounds for this variant of the unit distance problem). Indeed, suppose that $h_k \colon (U_k \dunion V_k) \to \R^{2}$ is an embedding of $G_{k}$ such that $h_k(U_k)$ and $h_k(V_k)$ are south-east chains, $h_k(U_k), h_k(V_k) \subset \{x \in \R^2 \colon x_{1} \ge 0, x_{2} \le 0\}$, and $\| \frac{h_k(u) + h_k(v)}{2}\|_2 = 1$ for all $(u,v) \in E_k$. Then, the points $-h_k(U_k)$ form a monotone concave chain in the north-west quadrant of the plane and $h_k(V_k)$ form a monotone convex chain in the south-east quadrant of the plane, which implies that the collection $\{-h_k(U_k), h_k(V_k)\}$ is convexly independent. Therefore, the polygon $P_k \coloneqq \conv\{-h_k(U_k)/2, h_k(V_k)/2\}$ has $2^{k+1}$ vertices and $\Theta(k2^k)$ diagonals of length one. As noted in the introduction, the existence of such an embedding cannot be excluded using the current list of forbidden matrices~\cite{aggarwal_unit}. We were neither able to construct these embeddings nor find a new forbidden pattern that occurs in $G_k$.

\bibliographystyle{acm}

\begin{thebibliography}{1}

\bibitem{aggarwal_unit}
{\sc Aggarwal, A.}
\newblock On unit distances in a convex polygon.
\newblock {\em Discrete Math. 338}, 3 (2015), 88--92.

\bibitem{bilka_convex_indep}
{\sc B{\'i}lka, O., Buchin, K., Fulek, R., Kiyomi, M., Okamoto, Y., Tanigawa,
  S., and T{\'o}th, C.~D.}
\newblock A tight lower bound for convexly independent subsets of the
  {M}inkowski sums of planar point sets.
\newblock {\em Electron. J. Combin. 17}, 1 (2010), \#N35, 4 pages.

\bibitem{eisenbrand_convex_indep}
{\sc Eisenbrand, F., Pach, J., Rothvo{\ss}, T., and Sopher, N.~B.}
\newblock Convexly independent subsets of the {M}inkowski sum of planar point
  sets.
\newblock {\em Electron. J. Combin. 15\/} (2008), \#N8, 4 pages.

\bibitem{erdos_moser_unit}
{\sc Erd\H{o}s, P., and Moser, L.}
\newblock Problem 11.
\newblock {\em Canad. Math. Bull. 2}, 1 (1959), 43.

\bibitem{erdos_szekeres_happy_ending}
{\sc Erd\H{o}s, P., and Szekeres, G.}
\newblock A combinatorial problem in geometry.
\newblock {\em Compos. Math. 2\/} (1935), 463--470.

\bibitem{furedi_unit_distance}
{\sc F{\"u}redi, Z.}
\newblock The maximum number of unit distances in a convex $n$-gon.
\newblock {\em J. Combin. Theory Ser. A 55}, 2 (1990), 316--320.

\bibitem{garcia-marco_knauer}
{\sc Garc{\'i}a-Marco, I., and Knauer, K.}
\newblock Drawing graphs with vertices and edges in convex position.
\newblock {\em Comput. Geom. 58\/} (2016), 25--33.

\bibitem{halman_onn_convex_dim}
{\sc Halman, N., Onn, S., and Rothblum, U.~G.}
\newblock The convex dimension of a graph.
\newblock {\em Discrete Appl. Math. 155}, 11 (2007), 1373--1383.

\bibitem{tiwary_convex}
{\sc Tiwary, H.~R.}
\newblock On the largest convex subsets in {M}inkowski sums.
\newblock {\em Inform. Process. Lett. 114}, 8 (2014), 405--407.

\end{thebibliography}

\end{document}